\newtheorem{proposition}{Proposition}[section]
\newtheorem{theorem}[proposition]{Theorem}
\newtheorem{lemma}[proposition]{Lemma}
\newtheorem*{noCounterDef}{Definition}
\newtheorem*{noCounterPro}{Problem}
\newcommand{\noproof}{\unskip\nobreak\hfill\penalty50\hskip2em\hbox{}\nobreak\hfill%
       $\square$\parfillskip=0pt\finalhyphendemerits=0\par}
\newenvironment{txteq}
  {
    \begin{equation}
    \begin{minipage}[c]{0.9\textwidth} 
    \em                                
  }
  {\end{minipage}\ignorespacesafterend\end{equation}}
\newcommand{\emtext}[1]{\text{\em #1}}
\newcommand{\offen}[1]{\mathaccent"7017\relax #1}
\providecommand{\eps}{\varepsilon}
\newcommand{\abs}[1]{\left\vert #1\right\vert}
\newcommand{\assign}{\mathrel{\mathop{:}}=}
\newcommand{\rand}[1]{\partial #1}
\newcommand{\quer}[1]{\overline{#1}}
\providecommand{\RRR}{\mathcal{R}}
\providecommand{\NN}{\mathbb{N}}
\title{\textbf{End spaces of graphs are normal}}
\author{Philipp Spr\"ussel}
\date{\empty}
\begin{document}

\maketitle

\begin{abstract}
  \noindent
  We show that the topological space of any infinite graph and its ends is normal. In particular, end spaces themselves are normal.
\end{abstract}

\section{Introduction}

The notion of the \emph{ends} of an arbitrary infinite graph, not necessarily locally finite, was introduced by Halin~\cite{Hal64}. Jung~\cite{Jung71} defined a topology on the set of ends, which was extensively studied and extended to the set of vertices and ends by Polat~\cite{Pol90},~\cite{Pol96a},~\cite{Pol96b}. Diestel and K\"uhn~\cite{DieKue04a},~\cite{DieKue04b} extended this topology to the entire graph (vertices, edges and ends); see also~\cite{DieGT}. Some fundamental topological questions about this space---in particular, when it is compact or metrizable---were recently answered by Diestel~\cite{Die06}.

One basic question that has remained open, for the end space $\Omega(G)$ of a graph $G$ as well as for the space $\hat V(G)=V(G)\cup\Omega(G)$ and for the entire space $|G|$ including both $G$ and its ends, is whether or not this space is normal. (They are easily seen to be regular.) When $G$ is connected and locally finite, then $\abs{G}$ coincides with the Freudenthal compactification of the cell-complex corresponding to $G$, and hence both $\abs{G}$ and the closed subspace $\Omega(G)$ of its ends are normal. When $G$ has a normal spanning tree, then $\hat V(G)$ is metrizable~\cite{Pol96a} and so is $|G|$~\cite{Die06}, so all three spaces are normal. In this paper, we show that $\Omega(G)$, $\hat V(G)$, and $|G|$ are always normal.

\section{Notation, background, and statement of results}
\label{sec:notation}

We assume familarity with the basic notions of infinite graph theory, for example as presented in Diestel~\cite{DieGT}. However, let us quickly review some of these. One-way infinite paths are called \emph{rays}. Every subray of a ray $R$ is a \emph{tail of $R$}. The union of two rays that have a common starting vertex and are otherwise disjoint, is called a \emph{double ray}. We call two rays \emph{equivalent} if, for every finite set $S$ of vertices, both rays have a tail in the same component of $G-S$. It is quite easy to see that this is an equivalence relation, the equivalence classes are called \emph{ends}. The set of ends of $G$ is denoted by $\Omega(G)$.

The topology on $\Omega(G)$ is defined as follows. Let $S$ be a finite set of vertices and $C$ a component of $G-S$. Let $\Omega_S(C)$ denote the set of those ends of $G$ whose rays have a tail in $C$. The sets $\Omega_S(C)$, taken over all $S$ and $C$, form a basis of the topology on $\Omega(G)$. We call $\Omega(G)$ together with this topology the \emph{end space} of $G$.

\begin{noCounterDef}
  A Hausdorff space $X$ is called \emph{normal} if any two disjoint closed subsets have disjoint neighbourhoods.
\end{noCounterDef}
It is well known that compact Hausdorff spaces are normal, and so are metric spaces. But the end space of an arbitrary graph need be neither compact nor metrizable (see~\cite{Die06},~\cite{DieLea01} for characterizations of those that are), nor even have a countable basis. The standard ways to prove normality therefore fail; our proof will be from first principles.

The first main result of this paper is
\begin{theorem}
  \label{thm:ends}
  Let $G$ be an infinite graph. Then its end space $\Omega(G)$ is normal.
\end{theorem}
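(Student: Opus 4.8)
The plan is to argue directly from the fact that the basic open sets $\Omega_S(C)$ are \emph{clopen}. For fixed finite $S$, the cells $\Omega_S(C)$, with $C$ ranging over the components of $G-S$, partition $\Omega(G)$, and if $S\subseteq S'$ then each cell of the partition $P_{S'}$ lies inside a cell of $P_S$; so the $P_S$ form a directed family of partitions into clopen sets whose cells are exactly the basic open sets. Before anything else I would dispose of the countable case: if $V(G)$ is countable then an enumeration of its vertices yields a cofinal chain of finite sets $S_n$, so countably many of the $P_S$ already carry the topology, and $\omega\mapsto(C_{S_n}(\omega))_n$ --- where $C_S(\omega)$ is the component of $G-S$ containing a tail of $\omega$ --- embeds $\Omega(G)$ into a countable product of discrete spaces. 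So $\Omega(G)$ is then metrizable, hence normal; all the difficulty is in the uncountable case, where no cofinal chain of finite sets exists.

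Fix disjoint closed sets $A,B$. Call a cell $\Omega_S(C)$ \emph{mixed} if it meets both $A$ and $B$, and let $M_S$ be the union of the mixed cells of $P_S$. Since the partitions refine, the sets $M_S$ decrease along the directed order, and $\bigcap_S M_S=\emptyset$: an end in every $M_S$ would have each of its basic neighbourhoods meeting both $A$ and $B$, hence would lie in $\overline A\cap\overline B=A\cap B=\emptyset$. Thus every end is separated from $A$, or from $B$, by some basic clopen set. The obstacle is that these separating cells, chosen end by end, overlap incompatibly: a cell avoiding $B$ and a cell avoiding $A$ can meet in a cell avoiding neither, so their naive unions need not be disjoint. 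What one really needs is a \emph{coherent} designation of cells as lying on the ``$A$-side'' or the ``$B$-side'' (at most one side per cell) such that the designation of a cell passes to all of its refinements, an $A$-side cell never meets $B$, a $B$-side cell never meets $A$, and in the end every end of $A$ lies in an $A$-side cell and every end of $B$ in a $B$-side cell; the union of all $A$-side cells and the union of all $B$-side cells are then disjoint open sets around $A$ and $B$.

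To produce such a designation I would fix a well-ordering $\langle v_\alpha:\alpha<\kappa\rangle$ of $V(G)$, put $V_\alpha=\{v_\beta:\beta<\alpha\}$, and recurse on $\alpha$: at stage $\alpha$ one maintains a coherent designation using only separators contained in $V_\alpha$; the set of ends it leaves undecided shrinks as $\alpha$ grows and, by $\bigcap_S M_S=\emptyset$, is empty once all of $V(G)$ is available, so the construction terminates at $\alpha=\kappa$. At a successor step one extends the designation to the ends that $v_\alpha$ newly separates from $A$ or from $B$ and --- more delicately --- decides the side of each cell meeting neither $A$ nor $B$, consistently with the designations already committed to finer cells. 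At a limit step one takes the union of the designations built so far.

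The step I expect to be the crux is exactly this limit control, together with the bookkeeping it imposes on the successor steps. A union of a long chain of clopen sets need not be clopen, and an end can lie in the closure of such a union even though no sequence of its members converges to it, since $\Omega(G)$ need not be first countable; so one cannot merely pass to a limit and trust that the growing $A$-side region stays closed off from $B$. The heart of the proof is thus to orchestrate the recursion --- which separators to bring in, and how to resolve the cells meeting neither $A$ nor $B$ --- so that at every stage the $A$-side region has closure disjoint from $B$ and symmetrically, all the way up to $\kappa$. Granting this, the two final unions are the required disjoint open neighbourhoods, so $\Omega(G)$ is normal; and essentially the same scheme, carried out inside $|G|$ (respectively $\hat V(G)$), handles those spaces too.
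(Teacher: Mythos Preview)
Your outline identifies the right obstacle but stops exactly where the work begins. You say ``the heart of the proof is thus to orchestrate the recursion \dots\ so that at every stage the $A$-side region has closure disjoint from $B$,'' and then write ``Granting this.'' That sentence is the gap: nothing in your scheme explains how to decide the side of a cell meeting neither $A$ nor $B$, or how to guarantee at a limit ordinal that the growing $A$-side region has not acquired an end of $B$ in its closure. Without such a mechanism the recursion is a wish, not a construction; the general fact $\bigcap_S M_S=\emptyset$ tells you that every end is eventually unmixed, but it says nothing about the boundary of the union of the choices you make along the way.

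The paper supplies precisely the missing device, though with different bookkeeping. It enumerates the ends of $A$ (not the vertices of $G$) as $\{\omega_i : i<\lambda\}$ and for each $\omega_i$ chooses a finite separator $S_i$ from $B$ subject to the single condition that $S_i\setminus\bigcup_{j<i}S_j$ be \emph{minimal}. This minimality is the engine: it forces every vertex $s$ first appearing in some $S_i$ to be joined to $\omega_i$ by a ray inside $\hat C(S_i,\omega_i)\cup\{s\}$ that avoids any prescribed finite piece of $\bigcup_{j<i}S_j$. Now if some $\omega\in B$ lay in the closure of $U=\bigcup_i\hat C(S_i,\omega_i)$, Lemma~\ref{lemma:border} and Lemma~\ref{lemma:comb} give a comb with spine in $\omega$ and teeth in $\bigcup_i S_i$; the minimality property lets one turn infinitely many of these teeth into pairwise disjoint rays to distinct ends of $A$, whence $\omega\in\overline A=A$, a contradiction. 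Your clopen-partition language is a reasonable framing, but it needs an idea of comparable specificity to control the closure of the $A$-side at stage $\kappa$, and you have not supplied one.
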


The topology on $\Omega(G)$ has been extended to $\hat V(G)\assign V(G)\cup\Omega(G)$ by Polat~\cite{Pol96a} and to the space $|G|$---which additionally contains the edges of $G$---by Diestel and K\"uhn~\cite{DieKue04a},~\cite{DieKue04b}.

The topology on $\hat V(G)$ is defined as follows. We start with the discrete topology on $V(G)$,~i.e.~for each vertex $v$, the set $\{v\}$ is open. For an end $\omega\in\Omega(G)$ and a finite set $S$ of vertices, denote by $C(S,\omega)$ the component of $G-S$ that contains a ray from $\omega$. The sets $\hat C(S,\omega)\assign C(S,\omega)\cup\Omega_S\big(C(S,\omega)\big)$, taken over all $S$, form a neighbourhood basis of $\omega$.

It is easy to see that the subspace topology $\hat V(G)$ induces on its closed subspace $\Omega(G)$ is exactly the topology on $\Omega(G)$ defined earlier.

As closed subspaces of normal spaces are also normal (note that this is not true for arbitrary subspaces), Theorem~\ref{thm:ends} therefore follows at once from our second main result:
\begin{theorem}
  \label{thm:Vhat}
  Let $G$ be an infinite graph. Then the space $\hat V(G)$ consisting of the vertices and ends of $G$ is normal.
\end{theorem}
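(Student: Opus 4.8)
The plan is to prove the stronger statement that any two disjoint closed subsets $A,B$ of $\hat V(G)$ can be separated by a \emph{clopen} set $K$ (with $A\subseteq K$ and $K\cap B=\emptyset$); then $K$ and its complement are disjoint open neighbourhoods and normality follows. Aiming for clopen separation is natural because $\hat V(G)$ has an abundance of clopen sets: each singleton $\{v\}$, $v\in V(G)$, is clopen, and for a finite set $S$ of vertices and a component $C$ of $G-S$ the basic set $\hat C(S,C):=C\cup\Omega_S(C)$ is clopen, its complement being $S$ together with the sets $\hat C(S,C')$ for the other components $C'$ of $G-S$. I would first record three reductions. The components of $G$ partition $\hat V(G)$ into clopen subspaces, and a topological disjoint union of normal spaces is normal, so I may assume $G$ connected; and if $\Omega(G)=\emptyset$ then $\hat V(G)$ is discrete and there is nothing to prove. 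If $G$ is countable, the sets $\{v\}$ and $\hat C(S,C)$ above form a countable base, so $\hat V(G)$ is second countable; being also regular it is metrizable, hence normal. Thus the real content lies in the uncountable case -- and note that inducting on $|V(G)|$ does not obviously help, since deleting a finite set of vertices from $G$ can leave components of full cardinality (as in a tree each of whose vertices has uncountably many neighbours).

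For the construction, fix a well-ordering of $V(G)$ and let $A,B\subseteq\hat V(G)$ be disjoint and closed. I would build, by transfinite recursion, a partition of $\hat V(G)$ into clopen pieces, starting from the single piece $\hat C(\emptyset,G)=\hat V(G)$ and repeatedly \emph{refining}. Call a piece $\hat C(S,C)$ \emph{decided} -- and remove it from further consideration -- as soon as it meets at most one of $A$ and $B$; it is then slated for $K$ if it misses $B$, and for the complement if it misses $A$ (a singleton $\{v\}$ is always decided, since $v$ lies in at most one of $A,B$). An \emph{undecided} piece $\hat C(S,C)$, one meeting both $A$ and $B$, is replaced by the pieces into which it is partitioned by a suitably chosen finite separator $S'\supsetneq S$: namely the singletons $\{v\}$ for $v\in S'\cap C$ together with the sets $\hat C(S',C')$ for the components $C'$ of $G-S'$ contained in $C$. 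Here $S'$ is chosen so as to make progress -- it should contain the $<$-least vertex still in $C$, and whenever the fact that $\hat C(S,C)$ meets both $A$ and $B$ is witnessed by an end of $A$ and an end of $B$ with tails in $C$, it should separate these two ends in $G-S'$. If this process halts, i.e.\ if the tree of undecided pieces is well-founded, then every node sits at finite depth, so the decided pieces form a clopen partition of $\hat V(G)$ in which each piece avoids $A$ or avoids $B$; taking $K$ to be the union of the pieces that avoid $B$ finishes the proof. Checking that the pieces at each stage genuinely partition $\hat V(G)$ and that leftover vertices are assigned consistently is routine.

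The crux -- and what I expect to be by far the hardest part -- is precisely the well-foundedness of this tree. The intended mechanism is that an infinite branch $\hat C(S_0,C_0)\supseteq\hat C(S_1,C_1)\supseteq\cdots$ should, by the choices built into the $S_n$, ``converge to an end'': repeatedly removing the $<$-least remaining vertex and separating witnessing ends should force $\bigcap_n\hat C(S_n,C_n)$ to be a single end $\omega$ for which the $\hat C(S_n,\omega)=\hat C(S_n,C_n)$ form a neighbourhood basis; since each of these meets both the closed set $A$ and the closed set $B$, we would get $\omega\in\overline A\cap\overline B=A\cap B=\emptyset$, the desired contradiction. The delicate point is that ``converge to an end'' can genuinely fail in the uncountable case: one must rule out an infinite branch along which $\bigcap_n C_n$ is a non-empty rayless subgraph in which vertices of $A$ and of $B$ merely accumulate while no \emph{single} end of $A$ or of $B$ survives down the branch -- and removing one vertex per step is not enough to prevent this. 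Overcoming this is where the real work is: I would expect it to require absorbing more of the current piece into each separator, and, more essentially, carrying down the tree a strengthened inductive invariant -- a well-founded rank of the relevant portions of $A$ and $B$, measuring roughly how they are distributed among the ends of the current piece, that strictly decreases whenever an undecided piece is refined -- or else replacing $\hat V(G)$ at the outset by a more tractable model of it (for instance one coming from a tree-decomposition of $G$ into finite-adhesion parts, or from a tree whose end space carries that of $G$). Everything else is bookkeeping.
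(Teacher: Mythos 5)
Your proposal does not contain a proof: the step you yourself identify as ``the crux'' -- the well-foundedness of the tree of undecided pieces, i.e.\ that every infinite refining branch $\hat C(S_0,C_0)\supseteq\hat C(S_1,C_1)\supseteq\cdots$ must converge to a single end -- is exactly the point where the whole difficulty of the theorem lives, and you leave it open, offering only candidate remedies (a rank invariant, absorbing more vertices into the separators, passing to a tree-decomposition or a tree model of the end space) without carrying any of them out. The failure mode you describe is real: along an infinite branch the intersection $\bigcap_n C_n$ can be a nonempty subgraph in which points of $A$ and of $B$ accumulate while no single end of $A\cup B$ survives, and removing one prescribed vertex and separating one witnessing pair of ends per step does not exclude this. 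Worse, your plan aims at a strictly stronger conclusion than normality, namely that any two disjoint closed sets are separated by a \emph{clopen} set. Having a base of clopen sets (which $\hat V(G)$ does) is far from sufficient for that: there are even metrizable spaces with a clopen base in which some pair of disjoint closed sets admits no clopen separation, so ``everything else is bookkeeping'' understates what remains -- establishing clopen separation for $\hat V(G)$ would be a substantial result in its own right and certainly does not follow from the machinery you set up.

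For comparison, the paper's argument avoids deciding a partition at all. It reduces to disjoint closed sets $A,B\subseteq\Omega(G)$, enumerates $A=\{\omega_i: i<\lambda\}$, and picks for each $i$ a finite separator $S_i$ of $\omega_i$ from $B$ chosen so that the \emph{new part} $S_i\setminus\bigcup_{j<i}S_j$ is minimal under inclusion; it then takes the single open set $U=\bigcup_i\hat C(S_i,\omega_i)\supseteq A$ and shows $\overline{U}\cap B=\emptyset$ (so $\hat V(G)\setminus\overline{U}$ is the neighbourhood of $B$). The point of the minimality condition is that any end $\omega\in B\cap\overline U$ would, via the boundary lemma and the comb lemma, be inseparable from $\bigcup_i S_i$, and the minimality lets one link infinitely many teeth of the resulting comb by disjoint rays to pairwise distinct ends of $A$, forcing $\omega\in\overline A=A$, a contradiction. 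This sidesteps precisely the transfinite ``convergence'' obstruction on which your construction is stuck; to salvage your route you would need to supply the missing well-foundedness argument (or the strengthened invariant you gesture at), which is not present.
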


We will prove Theorem~\ref{thm:Vhat} in Section~\ref{sec:proof}. In Section~\ref{sec:incl_edges}, we will discuss the space $|G|$ and the topologies given to it.


A spanning tree $T$ of $G$ is \emph{normal} if there is a vertex $r$ (called \emph{root} of $T$) such that for every edge $e$ of $G$, one of the endvertices of $e$ lies on the path in $T$ from $r$ to the other endvertex of $e$. If $G$ has a normal spanning tree, $\hat V(G)$ is metrizable~\cite[Theorems~5.8~\&~5.15]{Pol96a} and therefore Theorem~\ref{thm:Vhat} is trivial.

A set $S\subset V(G)$ \emph{separates} two points $x,y\in\hat V(G)$ if they do not lie (or have rays) in the same component of $G-S$. It separates two sets $A,B\subset\hat V(G)$ if it separates every point in $A$ from every point in $B$. If an end $\omega$ cannot be separated by finitely many vertices from a given (infinite) set $Z$ of vertices, then no ray $R$ in $\omega$ can be separated by finitely many vertices from $Z$. Thus, there are infinitely many disjoint paths from $V(R)$ to $Z$; the union of $R$ with these paths is called a \emph{comb}. The last vertices of these paths are called the \emph{teeth} of the comb, and $R$ is its \emph{spine}. A \emph{tail} of a comb is the union of a tail of its spine and all the paths that meet this tail. Note that not every vertex of the spine has to be the first vertex of one of the paths, and a tooth may lie on the spine if (and only if) its finite path is trivial. (See \cite{DieGT} for more on combs.)

We thus have the following lemma.
\begin{lemma}
  \label{lemma:comb}
  If an end $\omega$ of an infinite graph $G$ cannot be separated by finitely many vertices from a given set $Z$ of vertices, then there is a comb with teeth in $Z$ and spine in $\omega$.
\end{lemma}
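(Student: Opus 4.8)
The plan is to fix a ray $R\in\omega$ and to build the comb greedily, attaching one tooth-path at a time. The hypothesis unpacks conveniently: a finite set $S\subseteq V(G)$ \emph{separates $\omega$ from $Z$} exactly when no vertex of $Z$ lies in $C(S,\omega)$, so the assumption that $\omega$ cannot be separated from $Z$ by finitely many vertices amounts to the assertion that
\[
  Z\cap C(S,\omega)\neq\emptyset\qquad\text{for every finite }S\subseteq V(G).
\]
Together with the observation that deleting a finite vertex set from $R$ always leaves a tail of $R$ intact, this is all we shall use.

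Assume inductively that pairwise disjoint paths $P_1,\dots,P_n$ from $V(R)$ to $Z$ have been found, each meeting $R$ only in its first vertex (the case $n=0$, with no paths, is the base). Set $S\assign V(P_1\cup\dots\cup P_n)$, a finite set, and write $C\assign C(S,\omega)$. Deleting the finitely many vertices of $S$ from $R$ leaves a tail $R'$ of $R$ intact; as $R'$ is a ray of $\omega$ contained in $G-S$, it lies in the component $C$, so $V(R)\cap C\neq\emptyset$. By the displayed property, $C$ also contains a vertex $z\in Z$, and since $C$ is connected it contains a path $Q$ from $z$ to a vertex of $R$. The segment of $Q$ running from that vertex of $R$ up to the first vertex of $Q$ lying in $Z$ is then a path $P_{n+1}\subseteq C$ from $V(R)$ to $Z$ meeting $R$ only in its first vertex. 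Because $C$ is a component of $G-S$, the path $P_{n+1}$ is disjoint from $S$ and hence from each of $P_1,\dots,P_n$; in particular $P_1,\dots,P_{n+1}$ start at pairwise distinct vertices of $R$. This completes the induction step.

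Performing this for every $n$ yields an infinite family of pairwise disjoint paths from $V(R)$ to $Z$, each meeting $R$ exactly in its first vertex; their union with $R$ is a comb whose spine $R$ belongs to $\omega$ and whose teeth---the terminal vertices of the paths---lie in $Z$, which is what we want. I do not expect a genuine obstacle here; the only delicate point is to extract at each step a bona fide path from $V(R)$ to $Z$ out of the mere connectedness of $C(S,\omega)$---it must meet $R$ only at its start, meet $Z$ only at its end, and be disjoint from all earlier tooth-paths---and this is precisely why the paths already constructed are absorbed into the separator $S$ before the hypothesis is invoked.
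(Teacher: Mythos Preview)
Your argument is exactly the natural fleshing-out of the paper's treatment: the paper does not prove the lemma separately but states it as an immediate consequence of the preceding paragraph, which observes that if no finite set separates $\omega$ from $Z$ then no finite set separates a fixed ray $R\in\omega$ from $Z$, whence infinitely many disjoint $V(R)$--$Z$ paths exist---and your greedy construction is precisely how one produces those paths. One small slip in the write-up: the path $Q$ you pick in $C$ may meet $R$ in several vertices, so before extracting $P_{n+1}$ you should truncate $Q$ at its \emph{first} vertex of $R$ when traversed from $z$ (rather than, or in addition to, truncating at the first vertex of $Z$ from the other end); as written, the assertion that $P_{n+1}$ meets $R$ only in its first vertex is not justified.
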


Given a subset $U$ of a topological space $X$, we call the set of all points $x\in X$ such that every neighbourhood of $x$ meets both $U$ and $X\setminus U$ the \emph{boundary} of $U$ and denote it by $\rand U$. Further, we denote the \emph{closure} $U\cup\rand U$ of $U$ by $\quer{U}$.

We shall later need the following lemma.
\begin{lemma}
  \label{lemma:border}
  If $U$ is the union of (arbitrarily many) open sets $\hat C(S_i,\omega_i)$ in $\hat V(G)$, then $\rand U$ is contained in the closure of $\bigcup S_i$ in $\hat V(G)$.
\end{lemma}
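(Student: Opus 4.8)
The plan is to establish the contrapositive: if a point $x\in\hat V(G)$ does not lie in the closure $\quer Z$ of $Z\assign\bigcup_i S_i$, then $x\notin\rand U$. First I would dispose of the trivial case in which $x$ is a vertex: then $\{x\}$ is open, so $x$ is an isolated point of $\hat V(G)$ and hence cannot lie on the boundary of any subset at all. So from now on $x$ is an end $\omega$.

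Since the sets $\hat C(S,\omega)$, over all finite $S\subseteq V(G)$, form a neighbourhood basis at $\omega$, and $\omega\notin\quer Z$, I would choose a finite $S$ with $\hat C(S,\omega)\cap Z=\emptyset$, and write $C\assign C(S,\omega)$; thus $C$ is a component of $G-S$ that meets no $S_i$. Being connected and disjoint from $S_i$, the set $C$ lies inside a single component $D_i$ of $G-S_i$, for each $i$. The argument now splits according to whether or not $D_i$ happens to be $C(S_i,\omega_i)$ for some~$i$.

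If $D_i=C(S_i,\omega_i)$ for some $i$, then $C\subseteq C(S_i,\omega_i)$, and since a tail of a ray lying in $C$ is then also a tail in $C(S_i,\omega_i)$, we get $\Omega_S(C)\subseteq\Omega_{S_i}\big(C(S_i,\omega_i)\big)$ as well; hence $\hat C(S,\omega)\subseteq\hat C(S_i,\omega_i)\subseteq U$, so $\omega$ is an interior point of $U$ and in particular $\omega\notin\rand U$. If instead $D_i\neq C(S_i,\omega_i)$ for \emph{every} $i$, I would show that $\hat C(S,\omega)$ is disjoint from $\hat C(S_i,\omega_i)$ for every $i$, so that $\hat C(S,\omega)$ is a neighbourhood of $\omega$ missing $U$ and again $\omega\notin\rand U$. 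For the disjointness: the vertex parts $C$ (which lies in $D_i$) and $C(S_i,\omega_i)$ are distinct components of $G-S_i$, hence disjoint; and an end $\psi$ lying in both $\Omega_S(C)$ and $\Omega_{S_i}\big(C(S_i,\omega_i)\big)$ would have the property that any ray in it possesses both a tail in $C$ and a tail in $C(S_i,\omega_i)$, which is impossible since the tails of a single ray are nested while $C\cap C(S_i,\omega_i)=\emptyset$. In either case $\omega\notin\rand U$, which is what we wanted.

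I do not anticipate a serious obstacle: the lemma is essentially a combinatorial bookkeeping statement about how the basic open sets $\hat C(\cdot,\cdot)$ of $\hat V(G)$ can overlap, and everything reduces to the observation that, for a fixed finite vertex set, all rays of a given end have their tails in one and the same component. The only point that needs a little care is the disjointness claim just above, and even that becomes immediate once this observation is in hand.
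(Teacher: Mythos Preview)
Your argument is correct and follows essentially the same route as the paper's own proof: after disposing of vertices via the discreteness of $V(G)$, you pick a basic neighbourhood $\hat C(S,\omega)$ missing $\bigcup_i S_i$, observe that $C(S,\omega)$ lies in a single component of each $G-S_i$, and split into the two cases ``contained in some $\hat C(S_i,\omega_i)$'' versus ``disjoint from all of them''. The only cosmetic difference is that the paper phrases this as a proof by contradiction rather than as a contrapositive, and is a bit terser about why the end-parts $\Omega_S(C)$ and $\Omega_{S_i}\big(C(S_i,\omega_i)\big)$ are contained or disjoint.
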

\begin{proof}
  Assume, for contradiction, that $\rand U\setminus\quer{\bigcup S_i}\not=\emptyset$. First note that no vertex $v$ lies in $\rand U$, since the neighbourhood $\{v\}$ of $v$ avoids either $U$ or $\hat V(G)\setminus U$. Hence there is an end $\omega$ in $\rand U\setminus\quer{\bigcup S_i}$. There is a neighbourhood $\hat C(S,\omega)$ of $\omega$ that avoids $\bigcup S_i$. Thus for each $i$ the component $C(S,\omega)$ of $G-S$ is contained in some component of $G-S_i$ and hence either $\hat C(S,\omega)\subset\hat C(S_i,\omega_i)$ or $\hat C(S,\omega)\cap\hat C(S_i,\omega_i)=\emptyset$. If the latter holds for all $i$, we have $\hat C(S,\omega)\cap U=\emptyset$, contradicting the fact that $\omega\in\rand U$. On the other hand, if $\hat C(S,\omega)\subset\hat C(S_i,\omega_i)$ for at least one $i$, we have $\hat C(S,\omega)\subset U$, again a contradiction.
\end{proof}

\section{\large Proof of the normality theorem}
\label{sec:proof}

As we observed in Section~\ref{sec:notation}, Theorem~\ref{thm:Vhat} is trivial for graphs that have a normal spanning tree. For arbitrary graphs, Theorem~\ref{thm:Vhat} will follow easily from
\begin{lemma}
  \label{lemma:separatingends}
  Let $G$ be an infinite graph and $A,B\subset\Omega(G)$ disjoint closed sets in $\hat V(G)$. Then there exist disjoint neighbourhoods of $A$ and $B$ in $\hat V(G)$.
\end{lemma}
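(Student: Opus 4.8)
The plan is to reduce the statement to the construction of a single open set. It suffices to produce an open $U$ with $A\subseteq U$ and $\quer U\cap B=\emptyset$, for then $U$ and $\hat V(G)\setminus\quer U$ are disjoint neighbourhoods of $A$ and $B$. Since $B$ is closed and $A\cap B=\emptyset$, for every $\alpha\in A$ we may fix a finite $S_\alpha$ with $\hat C(S_\alpha,\alpha)\cap B=\emptyset$; symmetrically, for every $\beta\in B$ fix a finite $S_\beta$ with $\hat C(S_\beta,\beta)\cap A=\emptyset$. Setting $U\assign\bigcup_{\alpha\in A}\hat C(S_\alpha,\alpha)$ gives $A\subseteq U$ and $U\cap B=\emptyset$, and by Lemma~\ref{lemma:border} we have $\rand U\subseteq\quer{\bigcup_{\alpha\in A}S_\alpha}$. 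Hence everything comes down to choosing the separators so that no end of $B$ lies in $\quer{\bigcup_{\alpha\in A}S_\alpha}$.

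First I would reduce this to the purely combinatorial requirement that
\[
  S_\alpha\cap C(S_\beta,\beta)=\emptyset\qquad\text{for all }\alpha\in A,\ \beta\in B.
\]
Granting it, suppose some $\beta\in B$ lay in $\quer{\bigcup_\alpha S_\alpha}$; then $\beta$ cannot be separated from $\bigcup_\alpha S_\alpha$ by finitely many vertices, so Lemma~\ref{lemma:comb} yields a comb with spine $R$ in $\beta$ and teeth in $\bigcup_\alpha S_\alpha$. All but finitely many vertices of $R$ lie in $C(S_\beta,\beta)$, while every tooth lies outside $C(S_\beta,\beta)$; hence all but finitely many of the infinitely many pairwise disjoint spine-to-tooth paths of the comb meet the finite separator $S_\beta$ --- which is impossible. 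To secure the requirement, I would choose all the $S_x$ ($x\in A\cup B$) simultaneously by transfinite recursion along a well-ordering of $A\cup B$: when $x$ is reached, pick $S_x$ tight (so that $S_x$ equals the neighbourhood $N(C(S_x,x))$ of $C(S_x,x)$ in $G$) and such that $\hat C(S_x,x)$ avoids the opposite closed set while the \emph{enlarged component} $C(S_x,x)\cup N(C(S_x,x))$ is disjoint from $C(S_y,y)\cup N(C(S_y,y))$ for every earlier-treated $y$ on the opposite side. Whichever of $\alpha\in A$, $\beta\in B$ is treated first, this forces their enlarged components to be disjoint, and since $S_\alpha$ lies inside the enlarged component of $\alpha$, the required disjointness follows. (Passing to a tight separator is harmless: deleting from $S_x$ a vertex with no neighbour in $C(S_x,x)$ leaves $C(S_x,x)$, and hence $\hat C(S_x,x)$, unchanged.)

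The hard part --- in fact the only place where real work is needed --- is to verify that the recursion never gets stuck, i.e.\ that an admissible $S_x$ always exists. Suppose it fails at the stage treating $\alpha\in A$, and let $E$ be the union of the enlarged components $C(S_\beta,\beta)\cup N(C(S_\beta,\beta))$ over the earlier-treated $\beta\in B$. One checks that $\alpha$ then cannot be separated by finitely many vertices from $E$, so by Lemma~\ref{lemma:comb} there is a comb with spine in $\alpha$ and teeth in $E$. I would now argue a contradiction in two cases. If some single earlier $\beta$ receives infinitely many teeth, then $\alpha$ lies in the closure of $C(S_\beta,\beta)$; but that closure equals $\hat C(S_\beta,\beta)$, which is disjoint from $A$ --- contradiction. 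Otherwise infinitely many distinct earlier ends $\beta_k\in B$ contribute teeth converging to $\alpha$, and by routing these teeth through the components $C(S_{\beta_k},\beta_k)$ onto rays of the corresponding $\beta_k$ one shows that every basic neighbourhood of $\alpha$ contains some $\beta_k$, i.e.\ $\alpha\in\quer B=B$ --- contradicting $A\cap B=\emptyset$. The genuine subtlety is in the degenerate configurations of this last case, where infinitely many teeth share a neighbour or lie in a common finite separator; dealing with these is the technical heart of the argument, and is the step I expect to be the main obstacle.
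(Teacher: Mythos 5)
Your reduction is sound and matches the paper's: build $U=\bigcup_{\alpha\in A}\hat C(S_\alpha,\alpha)$ with $\quer U\cap B=\emptyset$, use Lemma~\ref{lemma:border} and Lemma~\ref{lemma:comb}, and observe that it suffices to choose the separators so that every end of $B$ has a finite separator whose component avoids all the $S_\alpha$. But the heart of the lemma is precisely the existence of such a choice, and this is where your proposal has a genuine gap --- one you yourself flag as ``the step I expect to be the main obstacle''. Two concrete problems. First, the implication ``recursion stuck at $\alpha$ $\Rightarrow$ $\alpha$ cannot be finitely separated from $E$'' does not hold for the admissibility condition you impose: if a finite $S$ separates $\alpha$ from $E$, then tightening $S\cup S_0$ (with $S_0$ a finite $\alpha$--$B$ separator) gives a separator whose \emph{component} avoids $E$ and which avoids each earlier $C(S_\beta,\beta)$, but the separator itself may still meet earlier sets $S_\beta$, so the \emph{enlarged} components need not be disjoint; your requirement is strictly stronger than what a finite separation from $E$ delivers, so the stuck case is not reduced to ``$\alpha$ inseparable from $E$'' without reworking the condition. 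Second, and more seriously, in your key case (infinitely many distinct earlier $\beta_k$ contribute teeth) the conclusion $\alpha\in\quer B$ needs, for every finite $S$, some $\beta_k$ with a ray in $C(S,\alpha)$. Routing a tooth $z\in C(S,\alpha)$ into $C(S_{\beta_k},\beta_k)$ only works if that whole component (or at least a ray of $\beta_k$ reachable from $z$) avoids $S$; since the components $C(S_{\beta_k},\beta_k)$ for distinct $k$ are not required to be disjoint --- and on a single side your recursion imposes no disjointness at all --- one finite set $S$ can meet every one of them, e.g.\ when all these components share a few vertices. Tightness of the separators gives you an edge from each tooth into the corresponding component, but not the ability to continue from the tooth to the end while avoiding a prescribed finite set.

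This is exactly the difficulty the paper's proof is engineered to overcome, and it does so with an idea your proposal lacks: well-order only $A$, and choose each $S_i$ so that $S_i\setminus\bigcup_{j<i}S_j$ is minimal under set containment~\eqref{eq:minimal}. Minimality yields the crucial ray property~\eqref{eq:ray}: every ``new'' separator vertex $s\in S_i\setminus\bigcup_{j<i}S_j$ starts a ray of $\omega_i$ that avoids any prescribed finite subset of the earlier separators and stays inside $U_i\cup\{s\}$. With this, from a comb at $\omega\in B$ with teeth in $\bigcup S_i$ one extracts teeth whose first-appearance indices strictly increase and links them by \emph{pairwise disjoint} rays to pairwise distinct ends of $A$, whence no finite set separates $\omega$ from $A$, contradicting that $A$ is closed. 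Your symmetric two-sided recursion also hides a second instance of the same problem (the stage treating some $\beta\in B$ against earlier $S_\alpha$'s needs essentially the full strength of this argument), which the proposal does not address. As it stands the proposal is a correct framing plus an unproven core claim, not a proof; to complete it along your lines you would need a substitute for~\eqref{eq:minimal}/\eqref{eq:ray} that forces the disjoint linkages in the ``degenerate configurations'' you mention.
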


\begin{proof}
%
  If $A$ and $B$ are both countable and infinite (the case where $A$ or $B$ is finite is trivial), there is a simple way of constructing disjoint neighbourhoods of $A$ and $B$: Enumerate the ends in $A$ by $\omega_0,\omega_1,\dots$ and the ends in $B$ by $\tilde\omega_0,\tilde\omega_1,\dots$. Now for $i=0,1,\dots$, there are finite sets $S_i$ and $\tilde S_i$ of vertices separating $\omega_i$ from $B$ and $\tilde \omega_i$ from $A$, respectively. We now have neighbourhoods $U\assign\bigcup_{i<\omega}\hat C(S_i,\omega_i)$ of $A$ and $\tilde U\assign\bigcup_{j<\omega}\hat C(\tilde S_j,\tilde\omega_j)$ of $B$. These will be disjoint if the neighbourhoods $\hat C(S_i,\omega_i)$ and $\hat C(\tilde S_j,\tilde \omega_j)$ are disjoint for any $i,j$. To achieve this, it suffices to choose the separators $S_i$ in a special way, namely, containing $\bigcup_{j<i}\tilde S_j$. Then, for every $j<i$, $C(S_i,\omega_i)$ will be contained in the component $C(\tilde S_j,\omega_i)$ of $G-\tilde S_j$, which cannot be $C(\tilde S_j,\tilde \omega_j)$, because $C(\tilde S_j,\tilde \omega_j)$ avoids $A$ but $C(S_i,\omega_i)$ contains $\omega_i\in A$. Hence $\hat C(S_i,\omega_i)\cap\hat C(\tilde S_j,\tilde \omega_j)=\emptyset$. Likewise, we choose each $\tilde S_j$ so as to contain $\bigcup_{i\le j}S_i$, which ensures that $\hat C(\tilde S_j,\tilde \omega_j)$ will be disjoint from every $\hat C(S_i,\omega_i)$ with $i\le j$. Thus, $U$ and $\tilde U$ will be disjoint.
  
  This procedure fails for uncountable $A$ or $B$, as it may be impossible at a transfinite step for a finite separator $S_i$ to contain every previous separator.
  
  For $A$ and $B$ that are not necessarily countable, we shall construct a neighbourhood $U$ of $A$ in $\hat V(G)$, whose closure in $\hat V(G)$ will not meet $B$. The desired neighbourhood of $B$ can then be chosen as $\hat V(G)\setminus\quer{U}$, completing the proof of Lemma~\ref{lemma:separatingends}.
  
  Let us write $A=\{\omega_i\:|\: i<\lambda\}$. At step $i<\lambda$ we will choose a finite set $S_i$ of vertices separating $\omega_i$ from $B$ and put $U_i\assign\hat C(S_i,\omega_i)$. Finally, let $U\assign\bigcup_{i<\lambda}U_i$.
  
  Obviously, we are not allowed to choose the sets $S_i$ arbitrarily; the choice has to guarantee that $\quer{U}$ does not meet $B$. To find out how we may ensure that, let us take a look at what happens if we have chosen the sets $S_i$ already, but badly: there is an end $\omega\in B$ in $\quer{U}$. By choice of the $U_i$, we have $\omega\in\rand U$. Hence Lemma~\ref{lemma:border} yields $\omega\in\quer{\bigcup_{i<\lambda}S_i}$. Thus, $\omega$ cannot be separated from $\bigcup_{i<\lambda}S_i$ by finitely many vertices; hence Lemma~\ref{lemma:comb} yields a comb with spine in $\omega$ and teeth in $\bigcup_{i<\lambda}S_i$. As every $S_i$ is finite, the comb has teeth in infinitely many $S_i$. Our aim will be to choose the $S_i$ so that infinitely many of these teeth can be linked by disjoint rays to (pairwise different) ends in $A$. Then $\omega\in B$ will lie in the closure of these ends, and hence in $A$, contrary to our assumption that $A\cap B=\emptyset$.
  
  For every $i<\lambda$ let $S_i$ be a finite set of vertices that separates $\omega_i$ from $B$, chosen so that
  \begin{txteq}
    \label{eq:minimal}
    $S_i\setminus\bigcup_{j<i}S_j$ is minimal (under set containment).
  \end{txteq}
  In particular, if $\omega_i$ can be separated from $B$ by a finite subset of $\bigcup_{j<i}S_j$, then $S_i$ is such a subset.
  
  We claim that every set $S_i$ also satisfies
  \begin{txteq}
    \label{eq:ray}
    For every $s\in S_i\setminus\bigcup_{j<i}S_j$ and every finite $S\subset\bigcup_{j<i}S_j$, there exists a ray in $\omega_i$ that starts in $s$, avoids $S$, and is contained in $U_i\cup\{s\}$.
  \end{txteq}
  Indeed, for every $s\in S_i\setminus\bigcup_{j<i}S_j$ and every finite $S\subset\bigcup_{j<i}S_j$, the set $S'_i\assign S\cup S_i\setminus\{s\}$ does not separate $\omega_i$ from $B$, as this would contradict~\eqref{eq:minimal}. So there is a double ray $D$ that joins $\omega_i$ with an end in $B$ and avoids $S'_i$. As $S_i$ separates $\omega_i$ from $B$, $D$ hits $S_i$. But $D$ avoids $S_i\setminus\{s\}\subset S'_i$, so $D$ meets $S_i$ only in $s$. Thus, $D$ contains a ray as required in~\eqref{eq:ray}.
  
  Let us prove that $\quer{U}\cap B=\emptyset$. Suppose not, and pick $\omega\in\quer{U}\cap B$. As described earlier, there is a comb $C$ in $G$ with spine in $\omega$ and teeth in $\bigcup_{i<\lambda}S_i$. Let $Z$ be the set of its teeth. For every $z\in Z$ there is a smallest index $i=i(z)<\lambda$ with $z\in S_i$. Since the sets $S_i$ are finite, we may assume that $i(z)\not= i(z')$ for $z\not= z'$. Inductively, for all $j\in\NN$, choose $z(j)\in Z$ as the vertex $z\in Z\setminus\{z(k)\:|\:k<j\}$ with smallest value $i(z)$. Write $i(j)$ for $i(z(j))$. Note that the function $i(j)$ is strictly increasing. Hence for every positive integer $j$, the finite set $\bigcup_{k<j}S_{i(k)}$ is a subset of the (possibly infinite) set $\bigcup_{l<i(j)}S_l$.
  
  We now inductively define disjoint rays $R_j$ for all $j\in\NN$ such that $R_j\in\omega_j$ starts at $z(j)$. By the choice of $z(j)$ and the definition of $i(j)$, we have $z(j)\notin\bigcup_{l<i(j)}S_l$. In particular, as $z(j)\in S_{i(j)}$,
  \begin{equation}
    \label{eq:newsi}
    S_{i(j)}\not\subset\bigcup_{l<i(j)}S_l.
  \end{equation}
  By \eqref{eq:ray}, there exists a ray $R_j\in\omega_{i(j)}$ that starts in $z(j)$, avoids the finite subset $\bigcup_{k<j}S_{i(k)}$ of $\bigcup_{l<i(j)}S_l$ and is contained in $U_{i(j)}\cup\{z(j)\}$. As $R_j$ avoids $\bigcup_{k<j}S_{i(k)}$, we have for every $k<j$ either $R_j\subset U_{i(k)}$ or $R_j\cap U_{i(k)}=\emptyset$. If $R_j$ was contained in $U_{i(k)}$, then $\omega_{i(j)}$ would also be contained in $U_{i(k)}$. But then $\omega_{i(j)}$ could be separated from $B$ by the finite subset $S_{i(k)}$ of $\bigcup_{l<i(j)}S_l$. By~\eqref{eq:minimal}, this would imply $S_{i(j)}\subset\bigcup_{l<i(j)}S_l$, contradicting~\eqref{eq:newsi}. We thus have $R_j\cap U_{i(k)}=\emptyset$, as well as $z(k)\notin R_j$ for all $k<j$.
  
  Therefore, $\RRR\assign\{R_j\:|\: j<\aleph_0\}$ is a set of disjoint rays, where $R_j$ belongs to the end $\omega_{i(j)}$ and starts at the vertex $z(j)$. As every finite set of vertices misses both a tail of our comb $C$ and all but finitely many rays in $\RRR$, no finite set of vertices separates $\omega$ from $A$, in contradiction to the fact that $A$ is closed and $\omega\notin A$.
\end{proof}

\begin{proof}[Proof of Theorem~\ref{thm:Vhat}]
  Let $A,B$ be disjoint closed sets in $\hat V(G)$. As $A\cap\Omega(G)$ and $B\cap\Omega(G)$ are closed in $\hat V(G)$, Lemma~\ref{lemma:separatingends} gives us disjoint neighbourhoods $O$ of $A\cap\Omega(G)$ and $U$ of $B\cap\Omega(G)$ in $\hat V(G)$. Then
  \begin{align*}
    &(O\setminus B)\cup (A\cap V(G))&
    &\text{and}&
    &(U\setminus A)\cup (B\cap V(G))&
  \end{align*}
  are disjoint neighbourhoods of $A$ and $B$, respectively. Thus, $\hat V(G)$ is normal.
\end{proof}

\section{Topologies including edges}
\label{sec:incl_edges}

The topological space $\abs{G}$ of an infinite graph $G$ consists of the disjoint union of $V(G)$, $\Omega(G)$ and a copy $\offen{e}=(u,v)$ of $(0,1)$ for every edge $e=uv\in E(G)$. The bijection between $(0,1)$ and $(u,v)$ can be extended to a bijection of $[0,1]$ and $[u,v]\assign\{u\}\cup (u,v)\cup\{v\}$, which induces a metric on $[u,v]$. For any edge $e$, let $d_e$ denote this metric.

In~\cite{Die06},~\cite{DieGT},~\cite{DieKue04a},~\cite{DieKue04b} several topologies on $|G|$ are studied. We shall present one of them, called \textsc{MTop}. However, it turns out that all these topologies induce the same topology on $\hat V(G)$: the topology we defined in Section~\ref{sec:notation}.

\textsc{MTop} is generated by the following basic open sets. For every $z\in\offen{e}=(u,v)$ and $\eps$ such that $0<\eps\le\min\{d_e(u,z),d_e(v,z)\}$, we let the open $\eps$-ball around $z$ in $\offen{e}$ be open in $\abs{G}$ and denote it by $O_{\eps}(z)$. For every vertex $u$ and $\eps\in (0,1]$, we let the set of all points on edges $[u,v]$ of distance less than $\eps$ from $u$ (measured in $d_e$ for each $e=uv$) be open in $\abs{G}$ and denote it by $O_{\eps}(u)$. For every end $\omega$, $\eps\in (0,1]$ and every finite set $S$ of vertices, we let the set $\hat C_{\eps}(S,\omega)$ be open in $\abs{G}$, where $\hat C_{\eps}(S,\omega)$ consists of $\hat C(S,\omega)$, all inner points of edges that have both endvertices in $C(S,\omega)$, and, for each edge $uv$ from $C(S,\omega)$ to $S$, all points on $[u,v]$ of distance less than $\eps$ from $u$ (measured in $d_e$ for $e=uv$).

As a generalization of Theorem~\ref{thm:Vhat} (note that since $\hat V(G)$ is a closed subspace of $|G|$, normality of $|G|$ implies that $\hat V(G)$ is normal) we prove the following result.

\begin{theorem}
  \label{thm:graph}
  Let $G$ be an infinite graph. Then $|G|$ with \textsc{MTop} is normal.
\end{theorem}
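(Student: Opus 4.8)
The plan is to reduce the normality of $\abs{G}$ with \textsc{MTop} to Theorem~\ref{thm:Vhat}, using the fact that $\abs{G}$ is obtained from $\hat V(G)$ by inserting open intervals for the edges and that each such interval is, topologically, very tame. Let $A$ and $B$ be disjoint closed subsets of $\abs{G}$. First I would intersect them with the closed subspace $\hat V(G)$: the sets $A_0\assign A\cap\hat V(G)$ and $B_0\assign B\cap\hat V(G)$ are disjoint closed subsets of $\hat V(G)$, so Theorem~\ref{thm:Vhat} provides disjoint open sets $O_A\supset A_0$ and $O_B\supset B_0$ in $\hat V(G)$. The issue is that these say nothing about the inner points of edges; we must extend $O_A,O_B$ to open sets of $\abs{G}$ and then handle the edge-interiors separately.

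The key observation is that $\abs{G}\setminus\hat V(G)$ is a disjoint union, over the edges $e=uv$, of the open intervals $\offen e\cong(0,1)$, and each point $z\in\offen e$ has arbitrarily small neighbourhoods $O_\eps(z)\subset\offen e$ that meet no other edge and no vertex or end. So I would first produce from $O_A$ an open set $\tilde O_A$ in $\abs{G}$ as follows: for every basic neighbourhood $\hat C(S,\omega)\subset O_A$ replace it by $\hat C_{1}(S,\omega)$ (the \textsc{MTop} version, which adds edge-interiors) and for every vertex $v\in O_A$ add a small star $O_{\eps_v}(v)$; do the symmetric thing for $O_B$. By Lemma~\ref{lemma:border}-type reasoning the only overlaps these thickenings can create lie ``near'' the separating vertex-sets, i.e.\ on edges incident with vertices of the relevant $S_i$'s, so by shrinking the $\eps$'s on those finitely-or-locally-controlled edges one keeps $\tilde O_A$ and $\tilde O_B$ disjoint off the edge-interiors that lie wholly inside one component. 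What remains is to deal, edge by edge, with each open interval $\offen e$: since $A\cap\offen e$ and $B\cap\offen e$ are disjoint relatively closed subsets of $(0,1)$, and $(0,1)$ is metric hence normal, we can separate them inside $\offen e$ by relatively open sets, and — because $O_\eps(z)$ for small $\eps$ is open in all of $\abs{G}$ — these relatively open subsets of $\offen e$ are genuinely open in $\abs{G}$. Taking the union over all edges of these interval-separations, together with $\tilde O_A$ and $\tilde O_B$ suitably trimmed, yields the required disjoint neighbourhoods.

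Concretely the construction I would carry out has these steps, in order. (1) Apply Theorem~\ref{thm:Vhat} to $A_0,B_0$ to get disjoint $\hat V(G)$-open $O_A\supset A_0$, $O_B\supset B_0$; by the proof of Lemma~\ref{lemma:separatingends} we may take $O_A=\bigcup_i\hat C(S_i,\omega_i)\cup(A_0\cap V(G))$ and similarly for $O_B$, so Lemma~\ref{lemma:border} controls their boundaries. (2) Thicken: define $P_A$ to be the \textsc{MTop}-open set consisting of all $\hat C_{\eps}(S_i,\omega_i)$ together with all $O_{\eps}(v)$ for $v\in A_0\cap V(G)$, with the $\eps$'s chosen small enough (edge by edge) that no point of an edge interior lies in both $P_A$ and the analogous $P_B$ unless that edge has both endvertices ``on the $A$-side'' of one separator and ``on the $B$-side'' of another — a configuration that the nesting of the $S_i$ (as engineered in Lemma~\ref{lemma:separatingends}) rules out along any single edge. (3) For each edge $e$, separate $A\cap\offen e$ from $B\cap\offen e$ inside $\offen e$ by the usual normality of the open interval, obtaining \textsc{MTop}-open sets $I_A^e,I_B^e\subset\offen e$; set $I_A=\bigcup_e I_A^e$ and $I_B=\bigcup_e I_B^e$. (4) Output $U_A\assign(P_A\cup I_A)\setminus B$ and $U_B\assign(P_B\cup I_B)\setminus A$; check these are open (removing the closed sets $B,A$ preserves openness), contain $A,B$ respectively, and are disjoint.

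I expect step~(2) — the disjointness of the thickenings on edge interiors — to be the main obstacle, because an edge $uv$ with one endvertex inside some $\hat C(S_i,\omega_i)\subset O_A$ and the other inside some $\hat C(\tilde S_j,\tilde\omega_j)\subset O_B$ would, after naive thickening, contribute a piece near $u$ to $P_A$ and a piece near $v$ to $P_B$; these are disjoint since they sit near different endvertices, but one must verify that no edge gets a $P_A$-piece and a $P_B$-piece near the \emph{same} endvertex, and that the finitely many ``boundary'' edges at each separator can be handled simultaneously. The resolution is that $P_A$'s edge-contributions near a vertex $u$ come only from $O_\eps(u)$ when $u\in A_0\cap V(G)$ or from $\hat C_\eps(S_i,\omega_i)$ when $u\in S_i$; since $O_A\cap O_B=\emptyset$ in $\hat V(G)$ and the $S_i,\tilde S_j$ were chosen (via the nesting in Lemma~\ref{lemma:separatingends}) so that $\hat C(S_i,\omega_i)\cap\hat C(\tilde S_j,\tilde\omega_j)=\emptyset$, a single vertex $u$ cannot simultaneously feed $P_A$ and $P_B$, so choosing each $\eps$ below $\tfrac12$ on every edge already gives the disjointness off $\hat V(G)$, and the remaining overlap is confined to $\offen e$-interiors which step~(3) handles. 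Once this bookkeeping is in place, openness and the containments in step~(4) are immediate from the definition of \textsc{MTop}.
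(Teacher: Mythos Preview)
Your outline has a genuine gap in step~(4): the sets $U_A=(P_A\cup I_A)\setminus B$ and $U_B=(P_B\cup I_B)\setminus A$ need not be disjoint, because you have done nothing to control $P_A\cap I_B$ (or $P_B\cap I_A$). Concretely, take an edge $e=uv$ with $u\in O_A$; then an initial segment of $\offen e$ near $u$ lies in $P_A$. Nothing prevents $B$ from containing an inner point $b$ of $e$ inside that segment (your $P_A$ was built only from $O_A\cap O_B=\emptyset$ in $\hat V(G)$, which says nothing about edge-interior points of $B$). Your $I_B^e$ then contains a neighbourhood of $b$ in $\offen e$, and any point $x$ in that neighbourhood other than $b$ itself lies in $P_A\cap I_B$, typically with $x\notin A\cup B$; such an $x$ lands in both $U_A$ and $U_B$. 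The same obstruction arises even when the entire $\offen e$ sits inside some $\hat C_\eps(S_i,\omega_i)\subset P_A$, so you cannot cure it just by shrinking the boundary-$\eps$'s. (Incidentally, your description of which vertices ``feed'' $P_A$ via $\hat C_\eps(S_i,\omega_i)$ should read $u\in C(S_i,\omega_i)$, not $u\in S_i$; the half-edges in $\hat C_\eps(S_i,\omega_i)$ are taken on the $C$-side.)

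The paper sidesteps this by a different decomposition. It applies Lemma~\ref{lemma:separatingends} only to the \emph{ends} $A\cap\Omega(G)$ and $B\cap\Omega(G)$, thickens the resulting $\hat V(G)$-open sets to disjoint $|G|$-open sets $O_1,U_1$, and then, for \emph{every} $a\in A$, picks a basic neighbourhood missing $B$ and halves its radius; the unions of the halved neighbourhoods over $A\cap\Omega(G)$ and over $A\setminus\Omega(G)$ give $O_2$ and $O_3$ (and symmetrically $U_2,U_3$). The standard half-radius argument then yields $O_3\cap(U_2\cup U_3)=\emptyset$ and $U_3\cap(O_2\cup O_3)=\emptyset$, so $(O_1\cap O_2)\cup O_3$ and $(U_1\cap U_2)\cup U_3$ are disjoint. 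This half-radius trick is exactly what handles the vertex/edge-interior interactions that your step~(4) leaves unresolved; if you want to keep your overall plan, you would need to replace your per-edge normality argument by something of this flavour, making $I_B^e$ avoid not just $A\cap\offen e$ but a definite neighbourhood of it, and coordinating this with $P_A$.
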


\begin{proof}
  Let $A,B$ be disjoint closed sets in $\abs{G}$. As $A\cap\Omega(G)$ and $B\cap\Omega(G)$ are closed in $\abs{G}$, Lemma~\ref{lemma:separatingends} gives us disjoint neighbourhoods $\hat O$ of $A\cap\Omega(G)$ and $\hat U$ of $B\cap\Omega(G)$ in $\hat V(G)$. These sets can be extended to disjoint open sets in $|G|$: Indeed, adding all edges with both endvertices in $\hat O\cap V(G)$ as well as, for each edge $uv$ from $\hat O\cap V(G)$ to $V(G)\setminus\hat O$, all points on $[u,v]$ of distance less than $\frac12$ from $u$ yields a neighbourhood $O_1$ of $A\cap\Omega(G)$ in $|G|$. Likewise, we obtain a neighbourhood $U_1$ of $B\cap\Omega(G)$ in $|G|$ disjoint from $O_1$.

  We will now construct further neighbourhoods $O_2$ of $A\cap\Omega(G)$ and $U_2$ of $B\cap\Omega(G)$ as well as neighbourhoods $O_3$ of $A\setminus\Omega(G)$ and $U_3$ of $B\setminus\Omega(G)$ so that $O_3$ is disjoint from $U_2\cup U_3$ and $U_3$ is disjoint from $O_2\cup O_3$.

  Since $B$ is closed, there exists for every $a\in A$ a neighbourhood $\hat C_{\eps_a}(S_a,a)$ (if $a$ is an end) or $O_{\eps_a}(a)$ (if $a$ is a vertex or a point on an edge) of $a$ avoiding $B$. Choose $O_2$ as the union of all the open sets $\hat C_{\frac12\eps_a}(S_a,a)$ for $a\in A\cap\Omega(G)$ and $O_3$ as the union of all the open sets $O_{\frac12\eps_a}(a)$ for $a\in A\setminus\Omega(G)$. The sets $U_2$ and $U_3$ are chosen analogously.

  It is straightforward to check that these neighbourhoods satisfy the desired conditions. As $O_1\cap U_1=\emptyset$, we deduce that
  \begin{align*}
    &O\assign(O_1\cap O_2)\cup O_3&
    &\text{and}&
    &U\assign(U_1\cap U_2)\cup U_3&
  \end{align*}
  are disjoint neighbourhoods of $A$ and $B$, respectively. Thus, $\abs{G}$ is normal.
\end{proof}

\textsc{MTop} is the topology defined in~\cite{DieGT}. In~\cite{Die06},~\cite{DieKue04a},~\cite{DieKue04b} some more topologies on $\abs{G}$ are studied. These can equip $\abs{G}$ with certain desirable properties, such as metrizability, or compactness. (See~\cite{Die06} for characterizations of those graphs for which $|G|$ is metrizable or compact with this topologies.) Our proof of Theorem~\ref{thm:graph} can be adapted to one of those topologies, called \textsc{Top}. (The third, \textsc{VTop}, is not even Hausdorff.) We thus have

\begin{theorem}
  \label{thm:TOP}
  Let $G$ be an infinite graph. Then $|G|$ with \textsc{Top} is normal.\noproof
\end{theorem}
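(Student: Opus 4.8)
The plan is to mimic the proof of Theorem~\ref{thm:graph} essentially verbatim, the only change being that the basic open sets of \textsc{Top} differ slightly from those of \textsc{MTop}, so I must check that the same construction still yields open, disjoint neighbourhoods. Recall that in \textsc{Top} the basic open neighbourhoods of a vertex $u$ are the same $O_\eps(u)$ as in \textsc{MTop}, the basic open neighbourhoods of an inner edge point $z$ are the same $O_\eps(z)$, but the basic open neighbourhood of an end $\omega$ for a finite separator $S$ and $\eps\in(0,1]$ is the set $\hat C'_\eps(S,\omega)$ consisting of $\hat C(S,\omega)$, all inner points of edges with both endvertices in $C(S,\omega)$, and \emph{for each edge $uv$ from $C(S,\omega)$ to $S$, all points on $[u,v)$} --- that is, the whole half-open edge from $u$, rather than just an $\eps$-initial segment. (In other words, \textsc{Top} differs from \textsc{MTop} only in that the ``frontier edges'' at an end are taken to be entirely inside the neighbourhood, and this is precisely what makes $|G|$ with \textsc{Top} compact in the locally finite case.) The crucial point for us is that these sets still induce the topology of Section~\ref{sec:notation} on $\hat V(G)$, so Lemma~\ref{lemma:separatingends} still applies.

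Concretely, I would first extract from Lemma~\ref{lemma:separatingends} disjoint neighbourhoods $\hat O$ of $A\cap\Omega(G)$ and $\hat U$ of $B\cap\Omega(G)$ in $\hat V(G)$, and extend them to disjoint open sets $O_1\supset A\cap\Omega(G)$ and $U_1\supset B\cap\Omega(G)$ in $|G|$, exactly as in the proof of Theorem~\ref{thm:graph}: add every edge with both endvertices in $\hat O\cap V(G)$ and, for each edge $uv$ from $\hat O\cap V(G)$ to $V(G)\setminus\hat O$, the points of $[u,v]$ within distance $\tfrac12$ of $u$; and symmetrically for $U_1$. Since $\hat O$ and $\hat U$ are disjoint in $\hat V(G)$, no edge and no half-edge gets added to both, so $O_1\cap U_1=\emptyset$. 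Next, using that $B$ is closed in $|G|$, pick for each $a\in A$ a basic open \textsc{Top}-neighbourhood avoiding $B$: $\hat C'_{\eps_a}(S_a,a)$ if $a$ is an end, $O_{\eps_a}(a)$ if $a$ is a vertex or an inner edge point. Set $O_2\assign\bigcup\{\hat C'_{\frac12\eps_a}(S_a,a): a\in A\cap\Omega(G)\}$ and $O_3\assign\bigcup\{O_{\frac12\eps_a}(a): a\in A\setminus\Omega(G)\}$, and define $U_2,U_3$ symmetrically. As each of these basic open sets avoids the corresponding opposite closed set and is contained in a basic open set of ``radius $\eps_a$'' that also avoids it, one checks --- just as for \textsc{MTop} --- that $O_3$ is disjoint from $U_2\cup U_3$ and $U_3$ is disjoint from $O_2\cup O_3$. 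Then $O\assign(O_1\cap O_2)\cup O_3$ and $U\assign(U_1\cap U_2)\cup U_3$ are disjoint open neighbourhoods of $A$ and $B$.

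The one place that needs genuine (if still routine) care, and which I expect to be the main obstacle, is verifying the disjointness relations among $O_2,O_3,U_2,U_3$ in \textsc{Top} rather than \textsc{MTop}, because the frontier edges of an end-neighbourhood are now \emph{entirely} inside the set. Suppose $\hat C'_{\frac12\eps_a}(S_a,a)$ met $\hat C'_{\frac12\eps_b}(S_b,b)$ for $a\in A$, $b\in B$. If they share an inner point of an edge $xy$ with both endvertices in $C(S_a,a)$ and both in $C(S_b,b)$, or share a point of a frontier half-edge, then in particular a whole component-vertex or an endvertex of a frontier edge lies in both $\hat C(S_a,a)$ and $\hat C(S_b,b)$ up to the short initial stubs at the separators --- and here one uses that a point of $[u,v)$ lying in $\hat C'_\delta(S,a)$ forces $u\in C(S,a)$, so that tracing back to $\hat V(G)$ gives a point of $\hat C(S_a,a)\cap\hat C(S_b,b)$, contradicting that these were chosen (via the ``radius $\eps$'' sets) to avoid $B$ and $A$ respectively exactly as in the \textsc{MTop} argument. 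The comparison of $O_3$ with $U_2$ and $U_3$ (and symmetrically) is handled the same way, now with the additional elementary observation that a point of a half-edge $[u,v)$ within distance $\tfrac12\eps_a$ of $u$ cannot simultaneously lie within distance $\tfrac12\eps_b$ of $v$, nor in a frontier stub issuing from $v$, once $\eps_a,\eps_b\le 1$. Having checked these finitely many cases, the proof of Theorem~\ref{thm:TOP} closes exactly as that of Theorem~\ref{thm:graph}; no new idea beyond Lemma~\ref{lemma:separatingends} is required, which is why the paper records it without proof.
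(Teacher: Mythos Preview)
Your overall approach---carry the \textsc{MTop} proof over verbatim---is exactly what the paper intends (it gives no proof, only the remark that the argument ``can be adapted''). But there is a wrinkle in your description of \textsc{Top}. In the references cited, \textsc{Top} is the topology whose basic neighbourhoods of a vertex $u$ (respectively of an end $\omega$, given a finite $S$) are obtained by choosing a \emph{separate} $\eps_e\in(0,1]$ for every incident (respectively frontier) edge $e$ and taking the corresponding initial segments; this makes \textsc{Top} strictly \emph{finer} than \textsc{MTop}, not coarser at the ends as you state. With the correct definition the sets $O_1,U_1,O_2,O_3,U_2,U_3$ from the \textsc{MTop} proof are automatically \textsc{Top}-open, and the only change is that ``halve $\eps_a$'' now means halving each $\eps_e$ individually---which causes no difficulty in the disjointness checks.

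Under the version of \textsc{Top} you wrote down (end-neighbourhoods swallow the entire half-open frontier edges $[u,v)$), your set $O_1$ is in fact \emph{not} open: if, say, $\hat C(S_0,\omega)\subset\hat O$ is a basic neighbourhood of some $\omega\in A$, then every frontier vertex of $C(S_0,\omega)$ lies in $S_0\subset V(G)\setminus\hat O$, so $O_1$ contains only the first half of each such frontier edge and therefore no basic \textsc{Top}-neighbourhood of $\omega$ in your sense. Enlarging $O_1$ to include the full half-open frontier edges would then destroy $O_1\cap U_1=\emptyset$ whenever some edge runs from $\hat O$ to $\hat U$. So with your variant of the topology the construction would need a genuine repair; with the standard definition from~\cite{Die06},~\cite{DieKue04b} it goes through essentially unchanged, which is why the paper records the theorem without proof.
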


In some contexts, however, such as plane duality~\cite{BruDie05}, the most natural space associated with a graph $G$ is not $\abs{G}$, but a certain quotient space $\tilde G$ of $\abs{G}$~\cite{DieKue04b} (where $\abs{G}$ carries either \textsc{Top} or \textsc{VTop}, but one can also define $\tilde G$ starting from $|G|$ with \textsc{MTop}). In this section we show that $\tilde G$, and its end space $\tilde\Omega(G)$, are also normal. We may assume that the topology on $\abs{G}$ is \textsc{Top}, since we know that $\abs{G}$ is normal in this case.

To define $\tilde G$, let us say that a vertex $v$ \emph{dominates} an end $\omega$ if every finite set of vertices that separates $v$ from $\omega$ contains $v$. Let $\Omega_v$ denote the set of ends of $G$ that are dominated by the vertex $v$. Throughout this section, we assume that
\begin{equation} \label{eq:domin}\tag{\ensuremath{\ast}}
  \emtext{no end of $G$ is dominated by more than one vertex.}
\end{equation}
By~\eqref{eq:domin}, we have $\Omega_u\cap\Omega_v=\emptyset$ for all $u\not= v$. Let $\tilde G$ be the quotient space of $\abs{G}$ obtained by identifying each vertex $v$ with the ends in $\Omega_v$. This means that there is an identification map $\sigma:\abs{G}\twoheadrightarrow\tilde G$ with $\sigma(x)=v$ if $x\in\Omega_v$ and $\sigma(x)=x$ otherwise, and a subset $U$ of $\tilde G$ is open if and only if $\sigma^{-1}(U)$ is open in $\abs{G}$. We write $\tilde\Omega(G)$ for the set of undominated ends of $G$, which we informally also call the \emph{ends of} $\tilde G$. Note that $\tilde\Omega(G)$ is a subspace both of $\abs{G}$ and of $\tilde G$; the subspace topologies coincide (even if we had chosen the topology on $\abs{G}$ as \textsc{MTop} or \textsc{VTop}), and we endow $\tilde\Omega(G)$ with this topology.

If $G$ is connected, then by~\eqref{eq:domin} and Halin's~\cite{Hal78} theorem that connected graphs not containing a subdivision of an infinite complete graph have normal spanning trees, $\abs{G}$ is metrizable in \textsc{MTop}~\cite[Theorem~3.1(i)]{Die06}. Hence $\tilde\Omega(G)$, too, is a metric space, and therefore normal.

\begin{theorem}
  \label{thm:gtilde}
  For every graph $G$ satisfying~\eqref{eq:domin}, the space $\tilde G$ is normal.
\end{theorem}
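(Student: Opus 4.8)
The plan is to transfer normality from $\abs{G}$ (which we know is normal in \textsc{Top} by Theorem~\ref{thm:TOP}) to the quotient $\tilde G$, using the same three-layer strategy as in the proofs of Theorems~\ref{thm:Vhat} and~\ref{thm:graph}: handle the ends of $\tilde G$ first via Lemma~\ref{lemma:separatingends}, then handle the vertices (and edge points) of $\tilde G$ separately, and finally glue the pieces together. Let me set up the key observation: if $A, B \subset \tilde G$ are disjoint closed sets, then $\sigma^{-1}(A)$ and $\sigma^{-1}(B)$ are disjoint closed sets in $\abs{G}$, but the trouble is that a vertex $v$ with $\sigma^{-1}(v) = \{v\} \cup \Omega_v$ may have its fibre split between "near $A$" and "near $B$" in $\abs{G}$ even though $v$ itself is a single point of $\tilde G$. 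So a naive push-forward of disjoint $\abs{G}$-neighbourhoods will not work, and we must instead build the neighbourhoods directly in $\tilde G$, or equivalently build $\sigma$-saturated open sets in $\abs{G}$.

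First I would split each of $A$ and $B$ into its trace on $\tilde\Omega(G)$ (the undominated ends, a subspace common to $\abs{G}$ and $\tilde G$) and the rest. Since $A \cap \tilde\Omega(G)$ and $B \cap \tilde\Omega(G)$ are disjoint closed subsets of $\Omega(G)$ — here one must check they are closed in $\hat V(G)$, which follows because $\tilde\Omega(G)$ carries the subspace topology and a point of $\Omega(G) \setminus \tilde\Omega(G)$ is a dominated end, separated from both by a finite vertex set containing its dominating vertex — Lemma~\ref{lemma:separatingends} yields disjoint neighbourhoods $\hat O, \hat U$ of these two sets in $\hat V(G)$. The crucial point is that the neighbourhoods produced by Lemma~\ref{lemma:separatingends} have the form $\hat V(G) \setminus \overline{\hat O}$ and a union of basic sets $\hat C(S_i,\omega_i)$; I would argue these can be made $\sigma$-saturated, or enlarged to $\sigma$-saturated open sets, by the following device: whenever such a neighbourhood contains a vertex $v$ it may as well contain all of $\sigma^{-1}(\sigma(v)) = \{v\} \cup \Omega_v$, and the dominated ends in $\Omega_v$ lie in $\hat C(S,\omega)$ whenever $v$ does and $S$ does not separate $v$ from them — so including $v$ forces including $\Omega_v$ only when $v \notin S$, which I can arrange. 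Then lift $\hat O, \hat U$ to disjoint open sets $O_1, U_1$ in $\abs{G}$ exactly as in the proof of Theorem~\ref{thm:graph} (adding edge interiors and half-edges), keeping them $\sigma$-saturated.

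Next, for the vertex/edge parts: for every point $a \in A$ that is a vertex or lies on an edge, closedness of $B$ gives a basic neighbourhood $O_{\eps_a}(a)$ avoiding $B$; but I must also ensure it avoids $\sigma^{-1}(B)$ in $\abs{G}$, i.e. avoids the dominated ends that get identified into points of $B$. For a vertex $a = v$ of $\tilde G$, the point $\sigma^{-1}(v)$ includes $\Omega_v$, and $B$ closed with $v \notin B$ means some finite $S \ni v$ separates $v$ from $B$ in $\tilde G$; I take the $\tilde G$-neighbourhood of $v$ obtained from $\hat C$-type sets around all ends of $\Omega_v$ together with a small edge-ball at $v$, intersected so as to stay off $B$, and push it down. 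Define $O_2$ as the union of the halved versions $\hat C_{\frac12\eps_a}(S_a,a)$, $O_3$ as the union of the halved vertex/edge balls, and $U_2, U_3$ analogously; by the standard "halving" argument (a point in $O_3 \cap U_3$ would have a full-radius ball avoiding one of the closed sets yet contain a half-radius ball of the other, impossible) one checks $O_3 \cap (U_2 \cup U_3) = \emptyset$ and $U_3 \cap (O_2 \cup O_3) = \emptyset$. Finally set
\[
  O \assign (O_1 \cap O_2) \cup O_3, \qquad U \assign (U_1 \cap U_2) \cup U_3,
\]
which are disjoint neighbourhoods of $A$ and $B$; pushing forward through $\sigma$ (legitimate because all sets involved are $\sigma$-saturated) gives disjoint open neighbourhoods in $\tilde G$, so $\tilde G$ is normal.

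The main obstacle will be the bookkeeping around $\sigma$-saturation: making sure that every neighbourhood we construct in $\abs{G}$ is a union of $\sigma$-fibres, so that its image is open in $\tilde G$, and simultaneously that disjointness is preserved. The delicate case is a vertex $v$ together with its dominated ends $\Omega_v$: these ends can be "far" from $v$ in the graph metric of $\abs{G}$, yet every neighbourhood of the point $\sigma(v)$ in $\tilde G$ must contain $v$ and all of $\Omega_v$ at once. I expect the clean way around this is to observe that a basic open set $\hat C_\eps(S,\omega)$ of $\abs{G}$ with $v \notin S$ automatically contains $\Omega_v$ whenever it contains $v$ (since no finite subset of $C(S,\omega) \cup S$ disjoint from $v$ separates $v$ from an end it dominates), and that Lemma~\ref{lemma:separatingends}'s separators $S_i$ can always be chosen to avoid the — at most one, by~\eqref{eq:domin} — vertex dominating any given end, so saturation comes essentially for free. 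Verifying this compatibility carefully is the part that needs real attention; everything else is a routine adaptation of the proof of Theorem~\ref{thm:graph}.
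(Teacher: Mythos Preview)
Your strategy is genuinely different from the paper's, and it also contains a real gap.

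\textbf{The paper's approach.} The paper does \emph{not} redo the three-layer construction. Instead it argues in two lines: pull $\tilde A,\tilde B$ back to disjoint closed sets $A=\sigma^{-1}(\tilde A)$, $B=\sigma^{-1}(\tilde B)$ in $\abs{G}$, apply normality of $\abs{G}$ (already proved) as a black box to get disjoint open $U\supset A$, $V\supset B$, and then \emph{shrink} these to $\sigma$-saturated open sets. The shrinking step is handled by a single clean lemma (Lemma~\ref{lemma:quotient}): if $X\subset\abs{G}$ is closed then so is $[X]\assign\sigma^{-1}(\sigma(X))$. Applying this to the closed set $\abs{G}\setminus U$ gives that $[\abs{G}\setminus U]$ is closed; since $A=[A]$ is disjoint from $\abs{G}\setminus U$ it is also disjoint from $[\abs{G}\setminus U]$, so $U'\assign\abs{G}\setminus[\abs{G}\setminus U]$ is an open, $\sigma$-saturated subset of $U$ still containing $A$. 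Likewise for $V'$. Then $\sigma(U'),\sigma(V')$ are the required disjoint open sets in $\tilde G$. All the saturation bookkeeping you worry about is absorbed into that one lemma, whose proof is short (finitely many dominating vertices in a separator $S$, and each $\Omega_v$ is closed).

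\textbf{The gap in your argument.} Your claim that $A\cap\tilde\Omega(G)$ is closed in $\hat V(G)$ is not justified and is in fact false in general. A dominated end can perfectly well be a limit in $\hat V(G)$ of undominated ends lying in $A$: take a vertex $v$ adjacent to every vertex of a ray $R_0$ and to the first vertex of infinitely many disjoint rays $R_1,R_2,\dots$; the end of $R_0$ is dominated by $v$ and is a limit of the undominated ends of the $R_i$. Your parenthetical (``separated from both by a finite vertex set containing its dominating vertex'') does not rescue this. The correct object to feed into Lemma~\ref{lemma:separatingends} is $\sigma^{-1}(A)\cap\Omega(G)$, which \emph{is} closed in $\hat V(G)$; but once you make that replacement you are carrying the dominated ends along anyway, and the subsequent saturation arguments you sketch (choosing separators $S_i$ to avoid dominating vertices, etc.) remain vague and would need substantial work to make rigorous---for instance, a vertex $v\in S_i$ may dominate ends inside $\hat C(S_i,\omega_i)$ without $v$ itself lying in $U$, breaking saturation.

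In short: your plan might be salvageable, but it reproves normality of $\abs{G}$ with extra constraints rather than using it. The paper's route via Lemma~\ref{lemma:quotient} replaces all of your saturation bookkeeping by a single closure argument, and is both shorter and cleaner.
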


Given a set $X\subset\abs{G}$, write $[X]$ for $\sigma^{-1}\big(\sigma(X)\big)$. (Thus, $[X]$ is the union of $X$ and all the sets of the form $\Omega_v\cup\{v\}$ that meet $X$.)

\begin{lemma}
  \label{lemma:quotient}
  If $X\subset\abs{G}$ is closed in $\abs{G}$, then so is $[X]$.
\end{lemma}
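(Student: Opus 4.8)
The plan is to show that the complement $\abs{G}\setminus[X]$ is open in $\abs{G}$, i.e.\ that every point $y\notin[X]$ has a neighbourhood in $\abs{G}$ disjoint from $[X]$. By definition, $[X]$ is the union of $X$ with all sets $\Omega_v\cup\{v\}$ that meet $X$; call a vertex $v$ \emph{bad} if $(\Omega_v\cup\{v\})\cap X\neq\emptyset$, so $[X]=X\cup\bigcup\{\Omega_v\cup\{v\}\:|\:v\text{ bad}\}$. Fix $y\notin[X]$. Since $X$ is closed and $y\notin X$, there is an open neighbourhood $W$ of $y$ in $\abs{G}$ with $W\cap X=\emptyset$; the task is to shrink $W$ so that it also avoids every $\Omega_v$ with $v$ bad. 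I would split into cases according to whether $y$ is an inner edge point, a vertex, or an undominated end.

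If $y$ is an inner point of an edge, then $y$ has a basic neighbourhood $O_\eps(y)$ consisting only of inner edge points, which meets no $\Omega_v$ at all; intersecting with $W$ finishes this case. If $y$ is a vertex $u$, then $u$ is not bad (else $u\in[X]$), so $\Omega_u\cap X=\emptyset$; the only danger is that $u$ lies in $\Omega_v$ for some bad $v$ — impossible since $u$ is a vertex, not an end — or that a basic neighbourhood $O_\eps(u)\cap W$ of $u$ hits some $\Omega_v$, but $O_\eps(u)$ contains no ends at all. So the real work is the case $y=\omega\in\tilde\Omega(G)$, an undominated end. Here $\omega$ has basic neighbourhoods $\hat C_\eps(S,\omega)$, and I must find one disjoint from $X$ (which exists, $X$ being closed and $\omega\notin X$) that in addition meets no $\Omega_v$ for $v$ bad. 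Suppose no such neighbourhood exists: then for every finite $S$ with $\omega\notin\quer X$ witnessed through $S$, the set $\hat C_\eps(S,\omega)$ contains an end $\omega'\in\Omega_v$ for some bad $v=v(S)$. The point is that an end $\omega'\in\Omega_v$ lies in $\hat C_\eps(S,\omega)$ only if $v\in S\cup C(S,\omega)$; if $v\in C(S,\omega)$ then $v$ itself lies in $\hat C_\eps(S,\omega)$, and since $\omega$ is undominated we can separate $\omega$ from $v$ by a finite set not containing $v$, hence enlarge $S$ to expel $v$ (and all previously troublesome vertices) from the component. Running this as a nested intersection over a suitable decreasing sequence of separators $S_0\subset S_1\subset\cdots$, either we eventually expel all bad vertices from the neighbourhood and are done, or infinitely many distinct bad vertices $v_n$ survive, each forced to lie in every $S_k$ from some point on, which contradicts finiteness of the $S_k$ — or else infinitely many bad vertices $v_n\in C(S_n,\omega)$ accumulate at $\omega$, so that $\omega$ cannot be separated from the (closed, in $X$) set $\{v_n\}$ by finitely many vertices; by Lemma~\ref{lemma:comb} there is a comb with spine in $\omega$ and teeth among the $v_n$, placing $\omega$ in $\quer X=X$ (as $X$ is closed and contains each $v_n$), contradicting $\omega\notin X$.

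The main obstacle is precisely the last case: controlling how bad vertices can crowd a neighbourhood of an undominated end $\omega$. The key leverage is that $\omega$ is undominated, so for \emph{each single} bad vertex $v$ there is a finite separator of $\omega$ from $v$ avoiding $v$; the subtlety is doing this for infinitely many $v$ simultaneously via a fixed finite separator, and the resolution is the dichotomy above — either finitely many bad vertices are relevant (and a single enlarged $S$ handles them) or infinitely many accumulate at $\omega$, which forces $\omega$ into the closure of $X\cap V(G)$ through Lemma~\ref{lemma:comb}, the contradiction we want. One should also note at the outset that $[X]$ differs from $X$ only by adding, for finitely-or-infinitely-many bad vertices $v$, the sets $\Omega_v$ — and each such $v$ already lies in $X$, so $\quer{X}\supset\{v:v\text{ bad}\}$; this observation makes the comb argument land cleanly inside $X$ itself.
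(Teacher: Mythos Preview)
There is a genuine gap. Your closing assertion that ``each such $v$ already lies in $X$'' (and hence $\{v:v\text{ bad}\}\subset\quer X$) is false: a vertex $v$ can be bad solely because some end $\omega'\in\Omega_v$ lies in $X$, and since the basic neighbourhoods $O_\eps(v)$ of a vertex contain no ends, such a $v$ need not lie in $\quer X$ at all. This breaks the comb argument you use to derive $\omega\in\quer X$ in the final case. (You also restrict the end case to undominated $\omega\in\tilde\Omega(G)$ and never return to dominated ends outside $[X]$, which must also be handled.)

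More seriously, your iterative scheme is built around a case that cannot occur, while the case that \emph{does} occur is left untouched. Once you have chosen $\hat C_\eps(S,\omega)$ disjoint from $X$, no bad $v$ can lie in $C(S,\omega)$: if it did, pick any $w\in(\Omega_v\cup\{v\})\cap X$; since $v\notin S$ and $v$ dominates (or equals) $w$, the set $S$ cannot separate $v$ from $w$, so $w\in\hat C_\eps(S,\omega)$, contradicting $\hat C_\eps(S,\omega)\cap X=\emptyset$. Hence every bad $v$ whose $\Omega_v$ meets $\hat C_\eps(S,\omega)$ already lies in the finite set $S$, and there is nothing to ``expel''. What remains is to find a neighbourhood of $\omega$ avoiding $\Omega_v$ for these finitely many $v\in S\cap[X]$; your dichotomy (infinitely many $v_n$ eventually forced into every $S_k$, versus infinitely many $v_n\in C(S_n,\omega)$) simply does not address a \emph{single} bad $v\in S$ whose $\Omega_v$ might meet every shrunk neighbourhood of $\omega$. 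The paper handles exactly this point by invoking the nontrivial fact (from~\cite{DieKue04b}) that each $\Omega_v$ is closed in $\abs{G}$: since $\omega\notin[X]\supset\Omega_v$, one may then intersect $\hat C_\eps(S,\omega)$ with the complement of the finite union $\bigcup_{v\in S\cap[X]}\Omega_v$ and be done. Your argument supplies no substitute for this step.
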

\begin{proof}
  If $[X]$ is not closed, there exists a point $x\notin[X]$ in the closure of $[X]$. Clearly, $x$ is an end.

  As $x$ does not lie in $X$, and $X$ is closed, there is an $\eps\in(0,1]$ and a finite set $S$ of vertices such that $\hat C_{\eps}(S,x)\cap X=\emptyset$. Then any point $z$ of $[X]$ in $\hat C_{\eps}(S,x)$ must lie in a set $\Omega_v\cup\{v\}$ that meets $X$. Since $S$ separates $x$---and therefore also $z$---from every point in $(\Omega_v\cup\{v\})\cap X$, the vertex $v$ has to lie in $S\cap[X]$ and $z$ is an end in $\Omega_v$.

  By a result of~\cite{DieKue04b}, the sets $\Omega_v$ are closed in $\abs{G}$, so the finite union $\bigcup_{v\in S\cap[X]}\Omega_v$ is also closed. The intersection of its complement in $\abs{G}$ with $\hat C_{\eps}(S,x)$ is a neighbourhood of $x$ that avoids $[X]$.
\end{proof}

\begin{proof}
[Proof of Theorem~\ref{thm:gtilde}]
  Let disjoint closed subsets $\tilde A,\tilde B$ of $\tilde G$ be given. Then $A\assign\sigma^{-1}(\tilde A)$ and $B\assign\sigma^{-1}(\tilde B)$ are disjoint closed sets in $\abs{G}$. By Theorem~\ref{thm:Vhat}, we have disjoint open sets $U\supset A$ and $V\supset B$ in $\abs{G}$.

  As $\abs{G}\setminus U$ is closed, Lemma~\ref{lemma:quotient} yields that $[\abs{G}\setminus U]$ is closed. Since $[A]=A$, by the definition of $A$, we have $A\cap[\abs{G}\setminus U]=\emptyset$. Hence, $U'\assign\abs{G}\setminus[\abs{G}\setminus U]$ is an open subset of $U$ that still contains $A$ and satisfies $[U']=U'$. Likewise, $V$ has an open subset $V'$ that still contains $B$ and satisfies $[V']=V'$.

  Thus, $\sigma(U')$ and $\sigma(V')$ are disjoint neighbourhoods of $\tilde A$ and $\tilde B$, respectively.
\end{proof}

\begin{noCounterPro}
  Is $\tilde G$ metrizable?
\end{noCounterPro}

Vella and Richter~\cite{VelRich06} solve this problem by proving that if $G$ is $2$-connected and no two vertices are connected by infinitely many internally disjoint paths, $\tilde G$ is even a Peano space.

\vspace{1em}

\small
\vskip2mm plus 1fill
\parindent=0pt\obeylines

Philipp Spr\"ussel {\tt <philipp.spruessel@gmx.de>}
\smallskip
Mathematisches Seminar
Universit\"at Hamburg
Bundesstra\ss e 55
20146 Hamburg
Germany


\begin{thebibliography}{9999}
  \bibitem{BruDie05}\textsc{Bruhn,~H. and Diestel,~R.}: \emph{Duality in
      infinite graphs}, Comb. Propab. Computing~\textbf{15} (2006), pp.~75-90.
  \bibitem{Die06}\textsc{Diestel,~R.}: \emph{End spaces and spanning trees},
    to appear in J. Combin. Theory Ser.~B.
  \bibitem{DieGT}\textsc{Diestel,~R.}: \emph{Graph Theory}, $3$rd Edition,
    Springer 2005.
  \bibitem{DieKue04a}\textsc{Diestel,~R. and K\"uhn,~D.}: \emph{On infinite
      cycles II}, Combinatorica~\textbf{24} (2004), pp.~91-116.
  \bibitem{DieKue04b}\textsc{Diestel,~R. and K\"uhn,~D.}: \emph{Topological
      paths, cycles and spanning trees in infinite graphs}, Europ. J.
    Combinatorics~\textbf{25} (2004), pp.~835-862.
  \bibitem{DieLea01}\textsc{Diestel,~R. and Leader,~I.}: \emph{Normal
      spanning trees, Aronszajn trees and excluded minors}, J. London
    Math. Soc.~\textbf{63} (2) (2001), pp.~16-32.
  \bibitem{Hal78}\textsc{Halin,~R.}: \emph{Simplicial decompositions of
      infinite graphs}, in ``Advances in Graph Theory, Annals of Discrete
    Mathematics''~\textbf{3} (B.Bollobás, Ed.), North-Holland 1978.
  \bibitem{Hal64}\textsc{Halin,~R.}: \emph{\"Uber unendlich Wege in Graphen},
    Math. Ann.~\textbf{157} (1964), pp.~125-137.
  \bibitem{Jung71}\textsc{Jung,~H.A.}: \emph{Connectivity in infinite graphs},
    in ``Studies in Pure Mathematics'' (L. Mirsky, Ed.), pp.~137-143,
    Academic Press, New York/London, 1971.
  \bibitem{Pol90}\textsc{Polat,~N.}: \emph{Topological aspects of infinite
      graphs}, in ``Cycles and Rays'' (G. Hahn et al., Eds.), pp.~197-220,
    NATO ASI Ser.~C, Kluwer, Dordrecht, 1990.
  \bibitem{Pol96a}\textsc{Polat,~N.}: \emph{Ends and multi-endings, I}, 
    J. Combin. Theory Ser.~B~\textbf{67} (1996), pp.~86-110.
  \bibitem{Pol96b}\textsc{Polat,~N.}: \emph{Ends and multi-endings, II}, 
    J. Combin. Theory Ser.~B~\textbf{68} (1996), pp.~56-86.
  \bibitem{VelRich06}\textsc{Vella,~A. and Richter,~R.B.}: \emph{Cycle spaces
    in topological spaces}, preprint, November 2006.
\end{thebibliography}
\end{document}